\documentclass[12pt, a4]{amsart}
\usepackage{amsmath,amssymb,cite,mathrsfs,graphicx}

\newtheorem{prop}{Proposition}

\newtheorem{obs}{Observation}

\theoremstyle{definition}

\theoremstyle{remark}

\numberwithin{equation}{section}
\author{Kohji Matsumoto}
\address{K. Matsumoto, Graduate School of Mathematics, Nagoya University, Chikusa-ku, Nagoya 464-8602, Japan}
\email{kohjimat@math.nagoya-u.ac.jp}
\thanks{}

\author{Mayumi Sh{\=o}ji}
\address{M. Sh{\=o}ji, Department of Mathematical and Physical Sciences, Japan Women's University, Mejirodai, Bunkyoku, Tokyo 112-8681, Japan}
\email{shoji@fc.jwu.ac.jp}
\thanks{}

\keywords{double zeta-function, zeros} 
\subjclass[2010]{Primary 11M32, Secondly 11M35}
\begin{document}

\title[Zeros of the Euler double zeta-function]{Numerical computations on the zeros of the Euler double zeta-function I} 
\date{}

\begin{abstract}
The distribution of the zeros of the Euler double zeta-function
$\zeta_2(s_1,s_2)$, in the case when $s_1=s_2$, is studied numerically.
Some similarity to the distribution of the zeros of Hurwitz zeta-functions
is observed.
\end{abstract}

\maketitle

\section{Introduction}\label{sec-1}

The multiple sum
\begin{align}\label{1-1}
\zeta_r(s_1,\ldots,s_r)=\sum_{1\leq n_1<\cdots<n_r}\frac{1}{n_1^{s_1}\cdots
n_r^{s_r}},
\end{align}
where $r\in\mathbb{N}$ and $s_1,\ldots,s_r$ are complex variables, was introduced
independently by Hoffman \cite{Hof92} and Zagier \cite{Zag94}, and has been studied
extensively in recent decades.   Mathematicians were first interested 
in special values of
\eqref{1-1} at positive integer points.   Then around 2000, the meromorphic
continuation of \eqref{1-1} to the whole space $\mathbb{C}^r$ was established, and
mathematicians started to consider analytic properties of \eqref{1-1}.

The case $r=2$ of \eqref{1-1}, that is
\begin{align}\label{1-2}
\zeta_2(s_1,s_2)=\sum_{n_1=1}^{\infty}\sum_{n_2=1}^{\infty}\frac{1}{n_1^{s_1}
(n_1+n_2)^{s_2}},
\end{align}
was first studied already by Euler.   Therefore \eqref{1-2} is sometimes called
the Euler double sum, or the Euler double zeta-function.   Since this is the
simplest case, it is natural to begin analytic studies with \eqref{1-2}.

The series \eqref{1-2} is absolutely convergent in the region
\begin{align}\label{1-3}
\{(s_1,s_2)\in\mathbb{C}^2\;|\;\Re s_1+\Re s_2>2, \quad \Re s_2>1\},
\end{align}
and can be continued meromorphically to the whole space $\mathbb{C}^2$.
The order estimate of $|\zeta_2(s_1,s_2)|$ outside the region \eqref{1-3}
was discussed in \cite{IshMat03}, \cite{KiuTan06}, \cite{KiuTanZha11}.
Various mean values of $|\zeta_2(s_1,s_2)|$ has recently been discussed by
\cite{MatTsuPre} and \cite{IkeMatNagPre}.

The problem of studying the distribution of zeros of \eqref{1-2} (or more
generally, \eqref{1-1}) was first proposed by Zhao \cite{Zha00}.
The case $r=1$ of \eqref{1-1} is nothing but the classical Riemann
zeta-function $\zeta(s)$.   For the Riemann zeta-function there is the famous
Riemann hypothesis, which predicts that all non-trivial zeros (that is, zeros except 
those on the negative real axis) lie on the line $\Re s=1/2$.   In the case of
$\zeta_2(s_1,s_2)$, however, the analogue of the Riemann hypothesis does not hold.
In fact, consider the case $s_1=s_2(=s)$.   Let $N(\sigma',\sigma'',T;\zeta_2)$ 
denotes the number of zeros of $\zeta_2(s,s)$ (counted with multiplicity) in the 
rectangle $\sigma'<\sigma<\sigma''$, $0<t<T$, where $\sigma=\Re s$ and $t=\Im s$.
Then it is shown in a recent preprint of 
Nakamura and Pankowski \cite{NakPanPre} that for any $\sigma'$ and $\sigma''$
satisfying $1/2<\sigma'<\sigma''<1$, it holds that
\begin{align}\label{1-4}
c_1 T\leq N(\sigma',\sigma'',T;\zeta_2)\leq c_2 T
\end{align}
where $c_1,c_2$ are constants with $0<c_1<c_2$.

The reason of this difference lies, probably, on the fact that $\zeta(s)$ has the
Euler product expansion while $\zeta_2(s_1,s_2)$ does not have.   
This is because the additive
structure $n_1+n_2$ in the denominator on the right-hand side of \eqref{1-2} breaks
multiplicative structure.    This observation suggests that the behaviour of zeros
of $\zeta_2(s_1,s_2)$ may resemble the behaviour of zeros of not $\zeta(s)$, but 
Hurwitz zeta-functions
$$
\zeta(s,\alpha)=\sum_{n=0}^{\infty}\frac{1}{(n+\alpha)^{s}}\qquad(0<\alpha\leq 1),
$$
because $\zeta(s,\alpha)$ also includes an additive structure in its denominator.
In fact, the result analogous to \eqref{1-4} is known for Hurwitz zeta-functions;
let $N(\sigma',\sigma'',T;\alpha)$ denotes the number of zeros of
$\zeta(s,\alpha)$ in the rectangle as above.   Then, at least when $\alpha$ is 
a rational number ($\neq 1,1/2$) (due to Voronin) or a transcendental number
(due to Gonek), it holds that $c_3 T\leq N(\sigma',\sigma'',T;\alpha)\leq c_4 T$
with $0<c_3<c_4$
(see Theorems 4.7, 4.8 and 4.10 in \cite[Chapter 8]{LauGar02}).

The purpose of the present series of papers is to study the behaviour of zeros of
$\zeta_2(s_1,s_2)$ from the viewpoint of numerical computations.
In the present paper, as the first step, we will study the case $s_1=s_2=s$, and
especially we will show that the behaviour of zeros of $\zeta_2(s,s)$ is
indeed similar to the behaviour of zeros of Hurwitz zeta-functions in some sense.

\section{The distribution of zeros off the real axis}\label{sec-2}

In this and the next section we describe the results
on the distribution of zeros of $\zeta_2(s,s)$.
In this case, the simplest way of computations is to use the harmonic product
formula
\begin{align}\label{2-1}
\zeta(s_1)\zeta(s_2)=\zeta_2(s_1,s_2)+\zeta_2(s_2,s_1)+\zeta(s_1+s_2).
\end{align}
Putting $s_1=s_2(=s)$ in \eqref{2-1}, we obtain
\begin{align}\label{2-2}
\zeta_2(s,s)=\frac{1}{2}\left\{\zeta(s)^2-\zeta(2s)\right\},
\end{align}
and hence all computations can be done just by using {\it Mathematica 9.0.1.0}, 
in which
the package for the computations of the values of $\zeta(s)$ is equipped.
However in a forthcoming paper we will study more general situation, when
the values of $s_1$ and $s_2$ are different.    As a preparation to such a study, 
in the present paper we will also develop another method, 
which can be applied to the general case.

Here we explain the theoretical background of our second method.
The basic formula which we use in our computations is the following form of the
Euler-Maclaurin summation formula:
\begin{align}\label{2-3}
\zeta_2(s_1,s_2)&=\frac{\zeta(s_1+s_2-1)}{s_2-1}-\frac{\zeta(s_1+s_2)}{2}\\
&+\sum_{q=1}^l (s_2)_q\frac{B_{q+1}}{(q+1)!}\zeta(s_1+s_2+q)-
\sum_{n_1=1}^{\infty}\frac{\phi_l(n_1,s_2)}{n_1^{s_1}},\notag
\end{align}
where $B_q$ is the $q$th Bernoulli number defined by
$t/(e^t-1)=\sum_{q=0}^{\infty}B_q t^q/q!$, 
$(s)_q=s(s+1)\cdots(s+q-1)$, and
\begin{align}\label{2-4}
&\phi_l(n,s)=\sum_{k=1}^n \frac{1}{k^s}-\\
&\quad\left\{\frac{n^{1-s}-1}{1-s}+\frac{1}{2n^s}
-\sum_{q=1}^l\frac{{s}_q B_{q+1}}{(q+1)! n^{s+q}}+\zeta(s)-\frac{1}{s-1}\right\}.
\notag
\end{align}
It is not difficult to see that $\phi_l(n,s)$ is
actually the usual remainder term of the Euler-Maclaurin formula:
\begin{align}\label{2-5}
\phi_l(n,s)=\frac{(s)_{2k+1}}{(2k+1)!}\int_n^{\infty}B_{2k+1}(x-[x])x^{-s-2k-1}dx,
\end{align}
where $k=l/2$ (if $l$ is even) or $=(l+1)/2$ (if $l$ is odd), $[x]$ is the
fractional part of $x$ and $B_q(x)$ is the $q$th Bernoulli polynomial.

This \eqref{2-3} is formula (3) of Akiyama, Egami and Tanigawa \cite{AkiEgaTan01}.
The last sum on the right-hand side of \eqref{2-3} is absolutely convergent
in the region $\Re(s_1+s_2)>-l$.
They use this formula (and its multiple generalization) to show the meromorphic
continuation of \eqref{1-1}.   Moreover they proved that 
$\zeta_2(s_1,s_2)$ is holomorphic except for the singularities
\begin{align}\label{sing}
s_2=1,\quad s_1+s_2=2,1,0,-2,-4,-6,\ldots 
\end{align}

The details how to calculate the zeros by using formula \eqref{2-3} will be
explained in Section \ref{sec-4}.

In this section we consider the distribution of zeros of $\zeta_2(s,s)$ off the
real axis.   Since $\overline{\zeta_2(s,s)}=\zeta_2(\overline{s},\overline{s})$
(here ``bar'' signifies the complex conjugate), it is enough to consider the
situation in the upper half-plane.
Our numerical result on the distribution of zeros is given in Figure \ref{X1-1}.
Let $D(a,b)=\{s\;|\;a<\sigma<b\}$ for any real numbers $a$ and $b$ with $a<b$.
From Figures \ref{X1-1}--\ref{X6-2} we can observe:

\begin{figure}
\begin{center} \leavevmode
\includegraphics[width=0.42\textwidth]{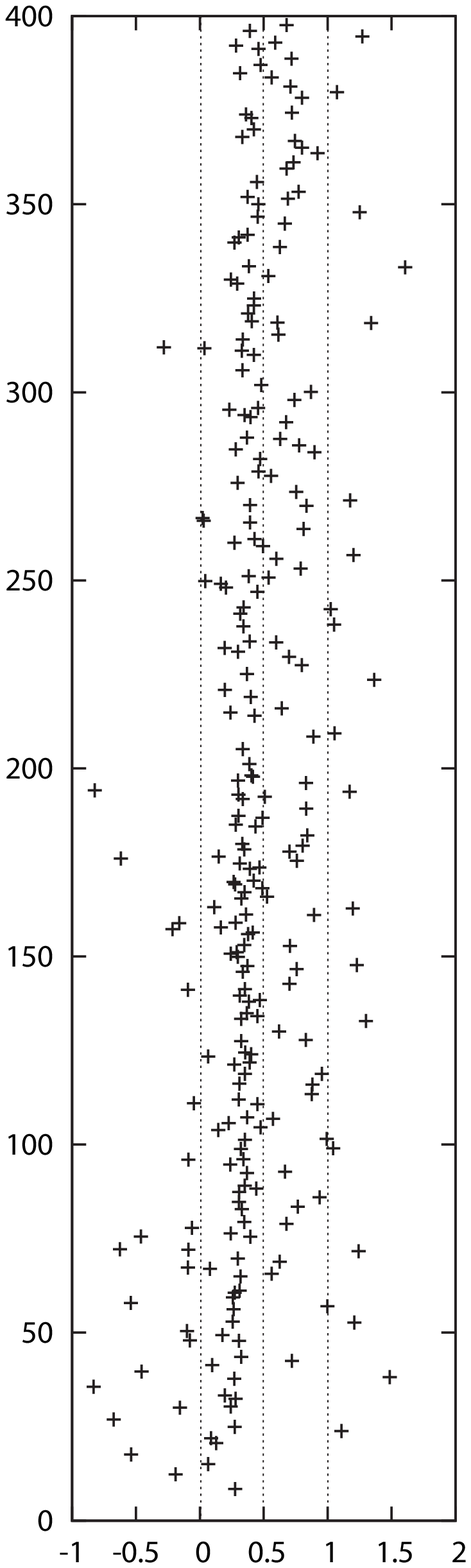} \hskip 1cm
\includegraphics[width=0.42\textwidth]{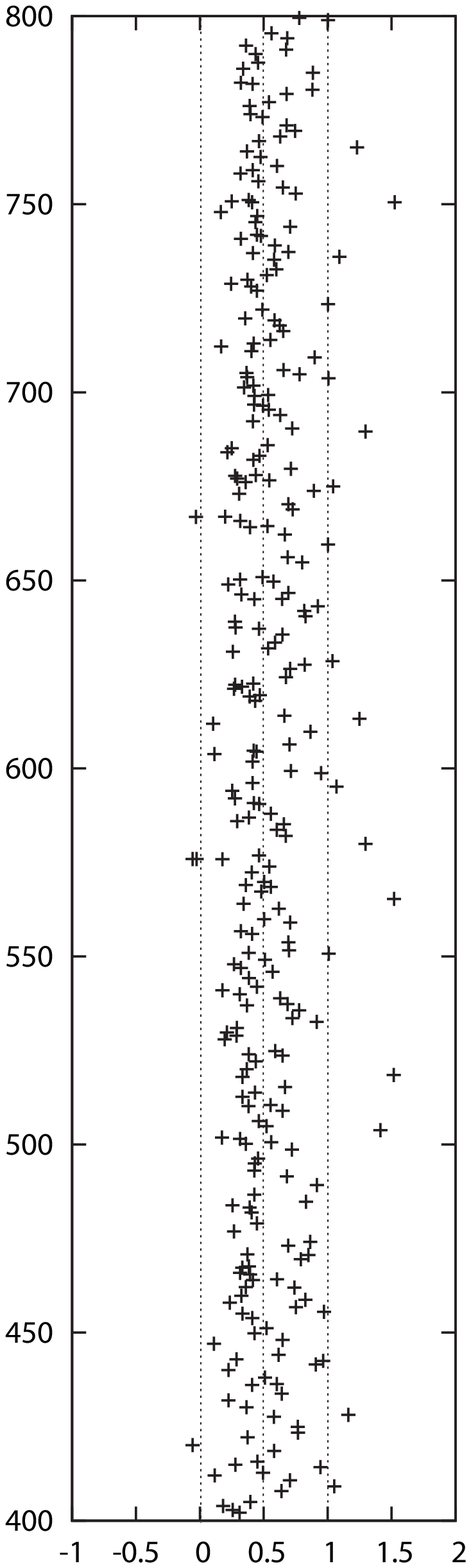}
\end{center}
\caption{The distribution of zeros of $\zeta_2(\sigma+it,\sigma+it)$ for $-1 \leq \sigma \leq 2$ and $0 \leq t \leq 800$. The horizontal axis represents $\sigma$ and the vertical axis does $t$.}
\label{X1-1}
\end{figure}

\begin{figure}
\begin{center} \leavevmode
\includegraphics[width=0.8\textwidth]{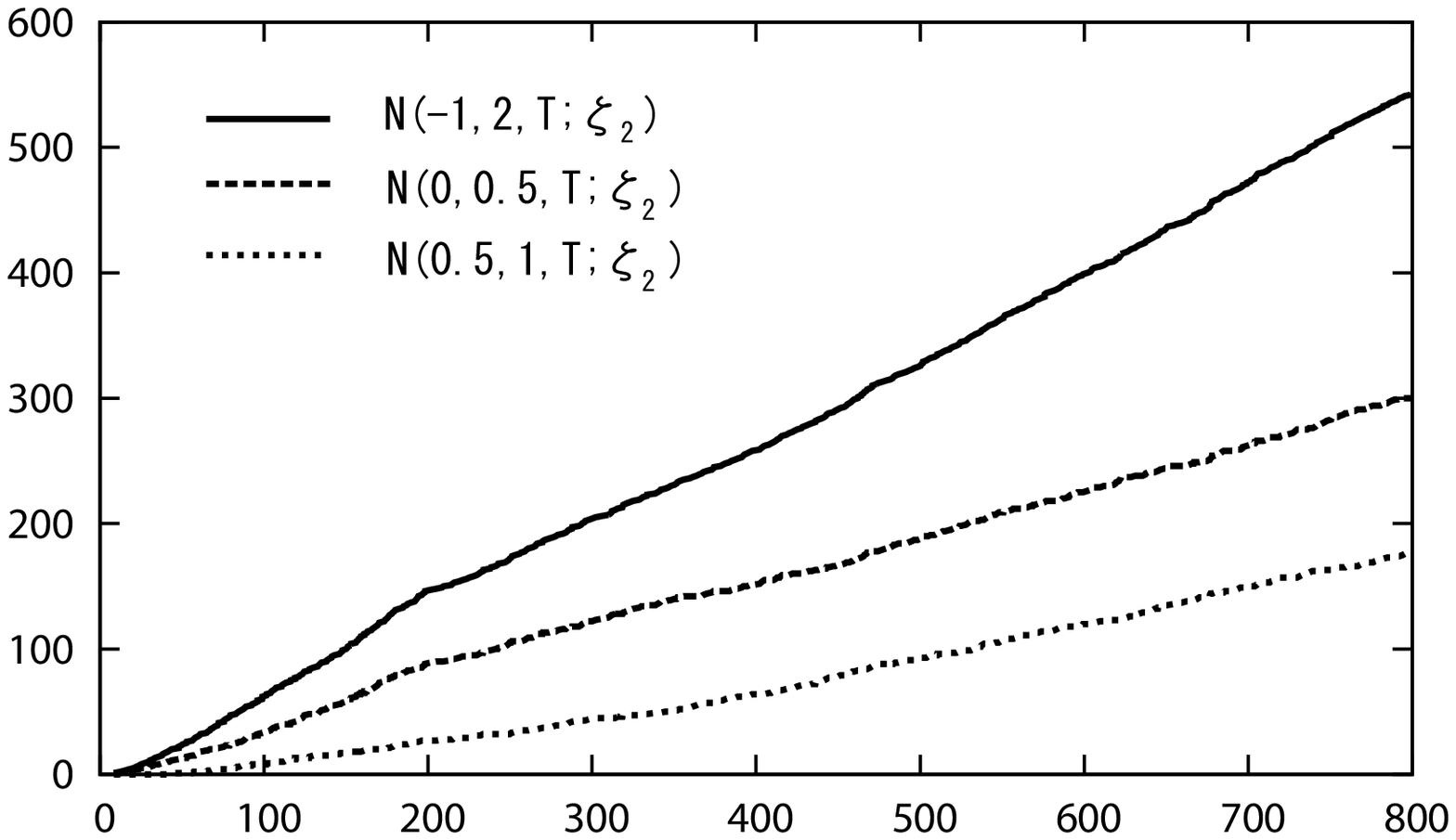}
\end{center}
\caption{Plots of numbers of zeros for $0 \le T \le 800$; $N(-1,2,T;\zeta_2), N(0,1/2,T;\zeta_2)$ and $N(1/2,1,T;\zeta_2)$, respectively from the top.  The horizontal axis represents $T$.}
\label{X1-2}
\end{figure}

\begin{obs}\label{obs1}
{\rm(i)} There are many zeros of $\zeta_2(s,s)$ in the strip $0<\sigma<1$.

{\rm(ii)} It seems that there are more zeros in the region 
$D(0,1/2)$ than in the region $D(1/2,1)$.   {\rm (}More rigorously, it seems that
$N(0,1/2,T;\zeta_2)$ is larger than $N(1/2,1,T;\zeta_2)$.{\rm )}

{\rm(iii)} At least in the range of our computations, there is no zero\footnote{
Some zeros in Figure \ref{X1-1} may seem to be on the line $\sigma=1/2$, but
numerical data shows that they are very close to, but not on that line.
For example there is a zero at $(0.502166\cdots)+i(559.930082\cdots)$.}
lying on the
line $\sigma=1/2$.

{\rm(iv)} There are some zeros in the region $\sigma>1$, but no zero when $\sigma$ is
sufficiently large.

{\rm(v)} There are some zeros in the region $\sigma<0$, but it seems that there is no 
zero when $|\sigma|$ is sufficiently large, and also becomes few and few when $t$ 
becomes large.
\end{obs}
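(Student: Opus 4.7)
My approach combines the explicit representation (2-2) with a systematic zero search. Since (2-2) gives $2\,\zeta_2(s,s) = \zeta(s)^2 - \zeta(2s)$, and a reliable high-precision evaluator for $\zeta$ is available in Mathematica, the problem reduces to locating the zeros of the explicit meromorphic function $F(s) = \zeta(s)^2 - \zeta(2s)$ in the rectangle $-1\le\sigma\le 2$, $0\le t\le 800$.

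First I would tile that rectangle by small boxes and, on the boundary of each, compute the winding number of $F$ by numerical contour integration (the argument principle), thereby counting zeros per box. Boxes flagged as containing a zero would be refined and each zero pinned down by Newton iteration on $F$. As an independent check I would re-evaluate $\zeta_2(s,s)$ via (2-3) (with $l$ chosen large enough that the tail $\sum\phi_l(n_1,s)/n_1^{s}$ is well below working precision in the rectangle) and verify that both methods return the same zero list; this second route is also the only practical one in the forthcoming case $s_1 \ne s_2$, so developing it here is worthwhile as a benchmark.

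Once the zero list is in hand, parts (i) and (ii) are immediate: partition the zeros by $\Re s$, count them in each sub-rectangle, and produce the graphs of $N(\sigma',\sigma'',T;\zeta_2)$ in $T$ shown in Figure \ref{X1-2}. For (iii), any zero whose computed $\Re s$ rounds to $0.5$ is reprocessed at substantially higher precision and a certified interval for $\Re s$ is produced; the sample zero at $(0.502166\ldots)+i(559.930082\ldots)$ already lies well outside any reasonable error tolerance, and the expectation is that every zero in the sampled range behaves similarly. For (iv) and (v), the asymptotics of $F$ supply rigorous exclusion zones: for $\sigma$ large positive, $F(s) = 2\cdot 2^{-s} + O(3^{-\sigma})$ is bounded away from $0$ beyond an explicit threshold; for $\sigma$ large negative, applying the functional equation to $\zeta(s)$ and to $\zeta(2s)$ shows that the two terms in $F$ have markedly different orders of growth, so $|F|$ is again bounded below for $\sigma$ past an explicit cutoff.

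The main obstacle is the negative statement (iii). A finite scan can only assert that no zero with $\Re s = 1/2$ was encountered in the sampled range; certifying this rigorously requires validated interval-arithmetic root finding that provably separates $\Re s$ from $1/2$, and this in turn rests on explicit a posteriori error bounds for the underlying $\zeta$ evaluator. The density imbalance in (ii) is similarly delicate: strictly speaking, the data only support the comparison for $T\le 800$, whereas a genuine asymptotic separation of $N(0,1/2,T;\zeta_2)$ from $N(1/2,1,T;\zeta_2)$ as $T\to\infty$ would need a mean-value input of the type used to establish (1-4), which is not at hand.
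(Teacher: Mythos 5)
Your overall strategy coincides with the paper's: both reduce everything to the explicit function $\zeta_2(s,s)=\tfrac12\{\zeta(s)^2-\zeta(2s)\}$ of \eqref{2-2}, use the Euler--Maclaurin representation \eqref{2-3}/\eqref{4-1} as an independent cross-check and as preparation for the case $s_1\neq s_2$, and treat (i)--(iii) and the first halves of (iv), (v) as empirical statements about a computed zero list. Where you genuinely differ: you certify and count zeros by winding numbers on a tiling of the rectangle, whereas the paper searches for candidates on plots of $|\zeta_2(\sigma+it,\sigma+it)|$, refines them with a root finder, and confirms them by watching the curves $t\mapsto\zeta_2(\sigma_j+it,\sigma_j+it)$ sweep across the origin (Subsection \ref{subsec-2}). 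Your argument-principle count is the stronger tool, since it also certifies that no zeros in a box were missed --- something the paper explicitly does not claim --- though boxes meeting $s=1$ or $s=1/2$ must account for the poles of $F$. For the second half of (iv) you expand $F(s)=\zeta(s)^2-\zeta(2s)=2\cdot 2^{-s}+O(3^{-\sigma+\varepsilon})$ from the Dirichlet series of the divisor function, while Proposition \ref{prop1} reaches the same conclusion $\zeta_2(s,s)=2^{-s}(1+Z)$ with $Z\to 0$ by splitting the double series \eqref{1-2} directly; your route is shorter and equally valid.

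The one step that would fail is your claimed rigorous exclusion zone for (v). The function $F(s)=\zeta(s)^2-\zeta(2s)$ vanishes at every negative even integer, since $\zeta(-2k)=\zeta(-4k)=0$; the paper records exactly this as $\zeta_2^C(-2k,-2k)=0$ and exhibits further real zeros near $-1,-3,-5$. Hence $|F|$ is not bounded below in any left half-plane, and the functional-equation comparison (where the dominance of $\zeta(2s)$ over $\zeta(s)^2$ comes from $\Gamma(1-2s)\gg\Gamma(1-s)^2$) only takes hold where the accompanying trigonometric factors are bounded away from zero, i.e.\ at a fixed positive distance $t\geq t_0>0$ from the real axis, where one has $|\sin(\pi s)|\geq\sinh(\pi t_0)$. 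Any rigorous version of (v) must therefore be stated as ``no zeros with $\sigma<-A$ off the real axis''; the paper is careful to offer this only as a guess at the end of Section \ref{sec-3} and proves nothing here. If you wish to keep this component, restrict to $t\geq t_0$ and make the resulting constants explicit; as written, the assertion contradicts the zeros at $s=-2,-4,\ldots$.
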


Observation (i) suggests that there should be infinitely many zeros in the region
$D(0,1/2)$ and $D(1/2,1)$.   As mentioned in Section \ref{sec-1}, 
at least in the case 
of $D(1/2,1)$, this fact has already been proved by Nakamura and Pankowski 
\cite{NakPanPre}.   
The graphs in Figure \ref{X1-2} look like straight lines, which suggests that
\begin{align}\label{line}
&N(-1,2,T;\zeta_2)\sim C_1 T, \;\;N(0,1/2,T;\zeta_2)\sim C_2 T,\\
&N(1/2,1,T;\zeta_2)\sim C_3 T\notag
\end{align}
(with some positive constants $C_1, C_2, C_3$) would probably hold, as
$T\to\infty$.
This agrees with \eqref{1-4} of Nakamura and Pankowski.

In Section \ref{sec-1} we also mentioned that the behaviour of zeros of
$\zeta_2(s,s)$ might be similar to that of Hurwitz zeta-functions.   We see that
observations (ii), (iii) and (iv) agree with this expectation.

In fact, in the case of $\zeta(s,\alpha)$, Garunk{\v s}tis and Steuding
\cite[Corollary 3]{GarSte02} proved that there are more zeros of $\zeta(s,\alpha)$
in $D(0,1/2)$ than those in $D(1/2,1)$.   Observation (ii) and Figure \ref{X1-2} 
suggests that the same
situation happens in the case of $\zeta_2(s,s)$. 

The line $\sigma=1/2$ is very important in the theory of $\zeta(s)$, but it seems 
that the same line has no special meaning for Hurwitz zeta-functions.   
Let $N_0(T)$ (resp. $N_0(T,\alpha)$) be the number of zeros in the interval
$\{s\;|\;\sigma=1/2, 0<t<T\}$ of $\zeta(s)$ (resp. $\zeta(s,\alpha)$).
Then it is believed that $N_0(T)\sim (T/2\pi)\log T$, while Gonek \cite{Gon81}
proved that $N_0(T,\alpha)$ is less than $c(T/2\pi)\log T$ with a certain
$c<1$ for $\alpha=1/3,2/3,1/4,3/4,1/6$ and $5/6$.   Moreover in the same paper he
conjectured that $N_0(T,\alpha)\ll T$ for any rational $\alpha\in(0,1)$, 
$\alpha\neq 1/2$. 
Our observation (iii) suggests that the line  $\sigma=1/2$ is also not special for
$\zeta_2(s,s)$.

\begin{figure}
\begin{center} \leavevmode
\includegraphics[width=0.8\textwidth]{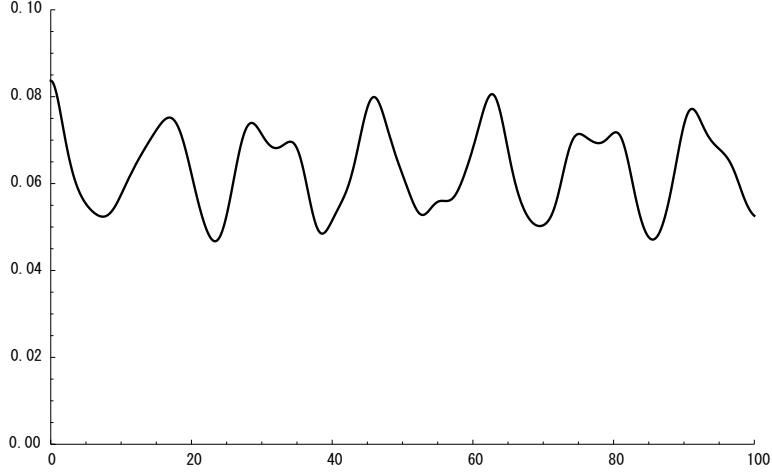} 
\end{center}
\caption{The graph of $|\zeta_2(4+it, 4+it)|$ for $0 \le t \le 100$.}
\label{X6-1}
\end{figure}

\begin{figure}
\begin{center} \leavevmode
\includegraphics[width=0.8\textwidth]{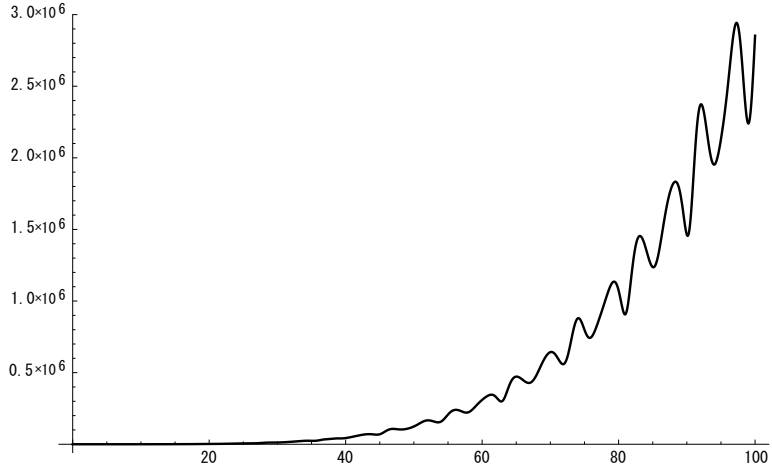}
\end{center}
\caption{The graph of $|\zeta_2(-2+it, -2+it)|$ for $0 \le t \le 100$.}
\label{X6-2}
\end{figure}

The fact corresponding to observation (iv) is classically known for Hurwitz
zeta-functions.   In fact, $\zeta(s,\alpha)\neq 0$ if $\sigma\geq 1+\alpha$
(\cite[Chapter 8, Theorem 1.1]{LauGar02}).
  We note here that the latter part of (iv) can be easily verified
theoretically.

\begin{prop}\label{prop1}
We have $\zeta_2(s,s)\neq 0$ when $\sigma$ is sufficiently large.
\end{prop}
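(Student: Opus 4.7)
The plan is to apply the harmonic product identity \eqref{2-2} and show that, for $\sigma=\Re s$ sufficiently large, the right-hand side is dominated by a single nonzero term. Intuitively, $\zeta(s)\to 1$ and $\zeta(2s)\to 1$ as $\sigma\to\infty$, and the leading behaviour of $\zeta(s)^2-\zeta(2s)$ is then governed by the $n=2$ Dirichlet coefficient.

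More precisely, for $\sigma>1$ one has the absolutely convergent Dirichlet series $\zeta(s)^2=\sum_{n\geq 1}d(n)n^{-s}$ (with $d$ the divisor function) and $\zeta(2s)=\sum_{n\geq 1}e(n)n^{-s}$, where $e(n)=1$ if $n$ is a perfect square and $e(n)=0$ otherwise. Subtracting termwise and using $d(1)=e(1)=1$, one obtains
\begin{equation*}
2\zeta_2(s,s)=\zeta(s)^2-\zeta(2s)=2\cdot 2^{-s}+\sum_{n\geq 3}a_n n^{-s},
\end{equation*}
where $a_n=d(n)-e(n)\geq 0$ and $a_n\leq d(n)$. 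The crucial point is that the $n=1$ coefficients cancel exactly, so that $2\cdot 2^{-s}$ is indeed the leading term.

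For the remainder, I would bound
\begin{equation*}
\Bigl|\sum_{n\geq 3}a_n n^{-s}\Bigr|\leq \sum_{n\geq 3}d(n)n^{-\sigma}=2^{-\sigma}\sum_{n\geq 3}d(n)(2/n)^{\sigma}.
\end{equation*}
Each term $d(n)(2/n)^\sigma$ tends to $0$ as $\sigma\to\infty$ since $n\geq 3$, and for $\sigma\geq 2$ the summand is dominated by $4 d(n) n^{-2}$, whose sum over $n\geq 3$ converges; dominated convergence then yields $\sum_{n\geq 3}d(n)(2/n)^\sigma\to 0$. Consequently, for $\sigma$ sufficiently large,
\begin{equation*}
|2\zeta_2(s,s)|\geq 2\cdot 2^{-\sigma}-2^{-\sigma}=2^{-\sigma}>0,
\end{equation*}
which proves the proposition.

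No serious obstacle arises here: the harmonic product identity \eqref{2-2} effectively reduces the question to an elementary Dirichlet series estimate, and the cancellation of the $n=1$ terms in $\zeta(s)^2$ and $\zeta(2s)$ exposes the dominant term at once. A little quantitative care in bounding the tail would yield an explicit admissible threshold for $\sigma$.
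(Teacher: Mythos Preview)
Your argument is correct. The harmonic product identity \eqref{2-2} gives the Dirichlet series
\[
2\zeta_2(s,s)=\zeta(s)^2-\zeta(2s)=\sum_{n\geq 1}\bigl(d(n)-e(n)\bigr)n^{-s},
\]
whose constant term vanishes and whose $n=2$ coefficient equals $2$, so factoring out $2^{-s}$ and bounding the tail by dominated convergence is entirely legitimate.

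The paper, however, takes a different route: it does \emph{not} invoke \eqref{2-2} but works directly from the defining double sum \eqref{1-2}. It peels off the term $n_1=1$, $n_2=1$ to isolate $2^{-s}$, writes $\zeta_2(s,s)=2^{-s}(1+Z)$, and then estimates the remainder $Z=2^{-s}+2Z_1+Z_2+Z_3+Z_4$ by explicit integral comparisons, showing each $|Z_j|\to 0$ as $\sigma\to\infty$. Your approach is noticeably shorter and more conceptual, since the divisor-series identity does all the combinatorial bookkeeping at once. The paper's approach is more laborious but has two compensating advantages: it yields fully explicit bounds on $Z$ (so an explicit admissible $\sigma$ is within reach), and, more importantly in the context of this series of papers, it relies only on the double-series representation and therefore generalizes to $\zeta_2(s_1,s_2)$ with $s_1\neq s_2$, where no analogue of \eqref{2-2} is available. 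Your closing remark that an explicit threshold could also be extracted from the Dirichlet-series tail is correct, so on that front the methods are on equal footing.
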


\begin{proof}
Assume $\sigma>1$.   Divide the sum \eqref{1-2} (with $s_1=s_2=s$) as
\begin{align}\label{2-6}
&\zeta_2(s,s)=\sum_{n_2\geq 1}\frac{1}{(1+n_2)^s}
    +\sum_{n_1\geq 2}\frac{1}{n_1^s}\sum_{n_2\geq 1}\frac{1}{(n_1+n_2)^s}\\
&\;=\frac{1}{2^s}+\sum_{n_2\geq 2}\frac{1}{(1+n_2)^s}+\frac{1}{2^s}\sum_{n_2\geq 1}
    \frac{1}{(2+n_2)^s}
    +\sum_{n_1\geq 3}\frac{1}{n_1^s}\sum_{n_2\geq 1}\frac{1}{(n_1+n_2)^s}\notag\\
&\;=\frac{1}{2^s}\left\{1+\sum_{n\geq 3}\frac{1}{(n/2)^s}
    +\sum_{n\geq 3}\frac{1}{n^s} 
    +\sum_{n_1\geq 3}\frac{1}{(n_1/2)^s}\sum_{n_2\geq 1}\frac{1}{(n_1+n_2)^s}
    \right\}.\notag
\end{align}
The second sum on the right-hand side is (putting $n=2k$ when $n$ is even and
$n=2k+1$ when odd) equal to
$$
\sum_{k\geq 2}\frac{1}{k^s}+\sum_{k\geq 1}\frac{1}{(k+1/2)^s}.
$$
We also divide the last sum on the right-hand side of \eqref{2-6} similarly.
Then we obtain
\begin{align}\label{2-7}
\zeta_2(s,s)=\frac{1}{2^s}\{1+Z\},
\end{align}
where
\begin{align}\label{2-8}
Z&=\frac{1}{2^s}+2\sum_{k\geq 3}\frac{1}{k^s}+\sum_{k\geq 1}\frac{1}{(k+1/2)^s}\\
&\;+\sum_{k\geq 2}\frac{1}{k^s}\sum_{n_2\geq 1}\frac{1}{(2k+n_2)^s}
 +\sum_{k\geq 1}\frac{1}{(k+1/2)^s}\sum_{n_2\geq 1}\frac{1}{(2k+1+n_2)^s}.\notag\\
&=\frac{1}{2^s}+2Z_1+Z_2+Z_3+Z_4,\notag
\end{align}
say.   Since
$$
\left|\sum_{n_2\geq 1}\frac{1}{(2k+n_2)^s}\right|\leq
\int_{2k}^{\infty}\frac{dx}{x^{\sigma}}=\frac{(2k)^{1-\sigma}}{\sigma-1},
$$
we have
\begin{align*}
&|Z_3|\leq \frac{2^{1-\sigma}}{\sigma-1}\sum_{k\geq 2}k^{1-2\sigma}
=\frac{2^{1-\sigma}}{\sigma-1}\left(2^{1-2\sigma}+\sum_{k\geq 3}k^{1-2\sigma}
\right)\\
&\leq\frac{2^{1-\sigma}}{\sigma-1}\left(2^{1-2\sigma}+\int_2^{\infty}x^{1-2\sigma}
dx\right)
=\frac{\sigma}{(\sigma-1)^2}2^{2-3\sigma}.
\end{align*}
Similarly we can show
$$
|Z_4|\leq \frac{\sigma-1/4}{(\sigma-1)^2}2^{1-\sigma}\left(\frac{3}{2}\right)
^{1-2\sigma},
$$
and further
$$
|Z_1|\leq \frac{1}{3^{\sigma}}+\int_3^{\infty}x^{-\sigma}dx
=\frac{\sigma+2}{\sigma-1}3^{-\sigma},
$$
$$
|Z_2|\leq \left(\frac{2}{3}\right)^{\sigma}+\int_1^{\infty}(x+1/2)^{-\sigma}dx
=\frac{\sigma+1/2}{\sigma-1}\left(\frac{2}{3}\right)^{\sigma}.
$$
Collecting the above results we find that $Z\to 0$ as $\sigma\to\infty$.
Therefore $1+Z\neq 0$ for sufficiently large $\sigma$, and hence
from \eqref{2-7}
the desired assertion follows.
\end{proof}

\section{Zeros on the real axis}\label{sec-3}

In this section we study the behaviour of $\zeta_2(s,s)$ on the real axis.
When $s_1=s_2=s$, \eqref{2-3} is
\begin{align}\label{3-1}
\zeta_2(s,s)&=\frac{\zeta(2s-1)}{s-1}-\frac{\zeta(2s)}{2}\\            
&+\sum_{q=1}^l (s)_q\frac{B_{q+1}}{(q+1)!}\zeta(2s+q)-                           
\sum_{n_1=1}^{\infty}\frac{\phi_l(n_1,s)}{n_1^{s}}.\notag
\end{align}
From \eqref{3-1} we can observe that $\zeta_2(s,s)$ has a double pole at $s=1$,
a single pole at $s=1/2$, and varies from $+\infty$ to $-\infty$ when $s$ moves from
1 to $1/2$.   Therefore there exists (at least) one zero $s=\sigma_0$ on the interval
$(1/2,1)$.   Figure \ref{X2} shows this situation, and we calculate
$\sigma_0=0.626817\cdots$.

\begin{figure}
\begin{center} \leavevmode
\includegraphics[width=0.45\textwidth]{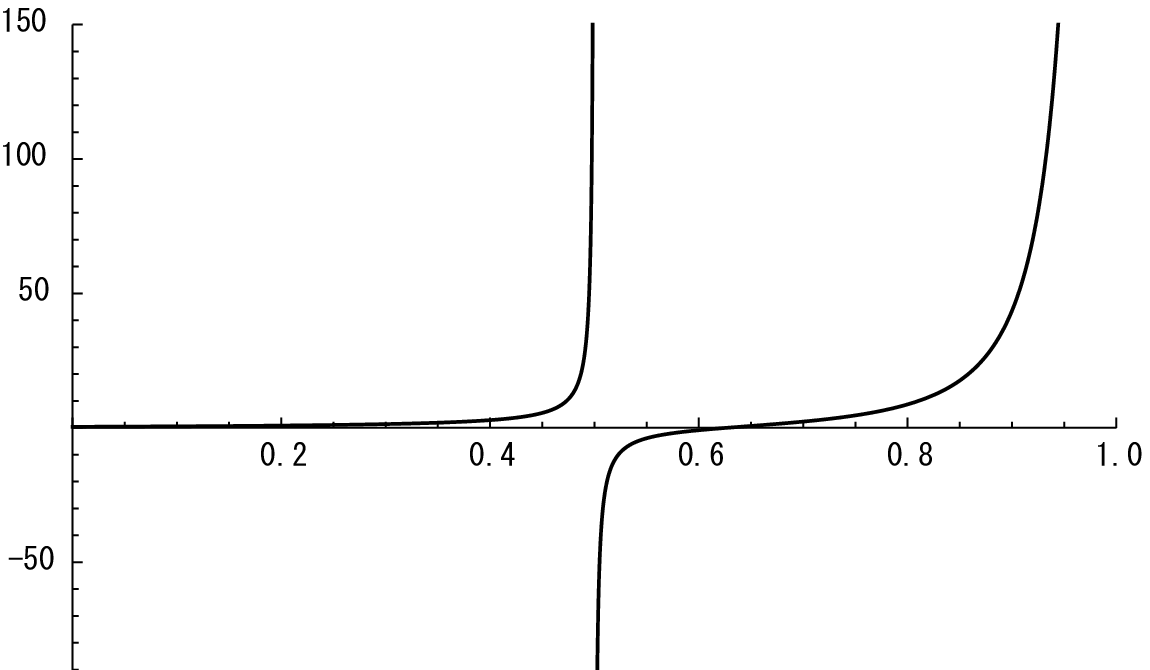} \hskip 1cm
\includegraphics[width=0.45\textwidth]{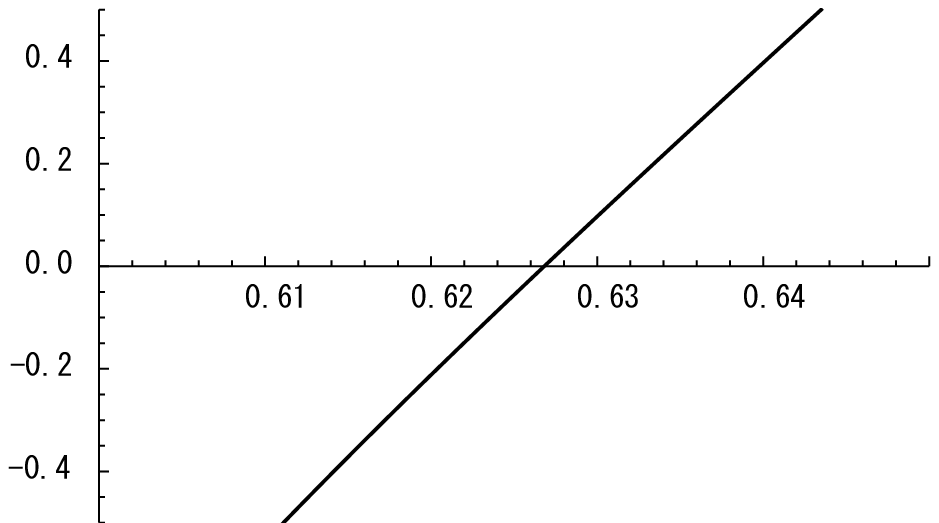}
\end{center}
\caption{The graph of $\zeta_2(\sigma, \sigma)$ for $0 \le \sigma < 1$ (left) and a close-up 
in the vicinity of $\sigma_0=0.626817\cdots$ (right). The vertical axis represents 
$\zeta_2$ and the horizontal axis does $\sigma$.}
\label{X2}
\end{figure}

The list \eqref{sing} of singularities implies that the intersections of the 
hyperplane $s_1=s_2$ and singular loci are
$$
s=1,\frac{1}{2},0,-1,-2,-3,\ldots.
$$
The first two of these points are poles (when restricted to the hyperplane
$s_1=s_2(=s)$), as discussed above.
The points $s=-k$ ($k=0,1,2,\ldots$) are also on singular loci, so they are points 
of indeterminancy.    Figures \ref{X3-1}--\ref{X3-2} are the graph of 
$\zeta_2(\sigma,\sigma)$, therefore
the values at $\sigma=-k$ in Figure \ref{X3-2} show the limit value
\begin{align}\label{3-2}
\lim_{\varepsilon\to 0}\zeta_2(-k+\varepsilon,-k+\varepsilon).
\end{align}
This type of limit is called ``central values'' in Akiyama and Tanigawa
\cite{AkiTan01}.   We use their notation to write \eqref{3-2} as
$\zeta_2^C(-k,-k)$.   Kamano \cite{Kam06} proved an explicit formula for
$\zeta_2^C(-k,-k)$.   Formula (1.6) in \cite{Kam06} implies
\begin{align}\label{3-3}
\zeta_2^C(-k,-k)&=\frac{1}{2}\left\{\zeta(-k)^2-\zeta(-2k)\right\}
\end{align}
for $k\in\mathbb{N}\cup\{0\}$.   In particular, as Kamano stated
\cite[Corollary 2]{Kam06}, $\zeta_2^C(0,0)=3/8$ and $\zeta_2^C(-2k,-2k)=0$ for any 
$k\in\mathbb{N}$.  (The latter was conjectured by Akiyama, Egami and Tanigawa 
\cite{AkiEgaTan01}.)   These values agree with Figure \ref{X3-1} and Figure \ref{X3-2}.

\begin{figure}
\begin{center} \leavevmode
\includegraphics[width=0.45\textwidth]{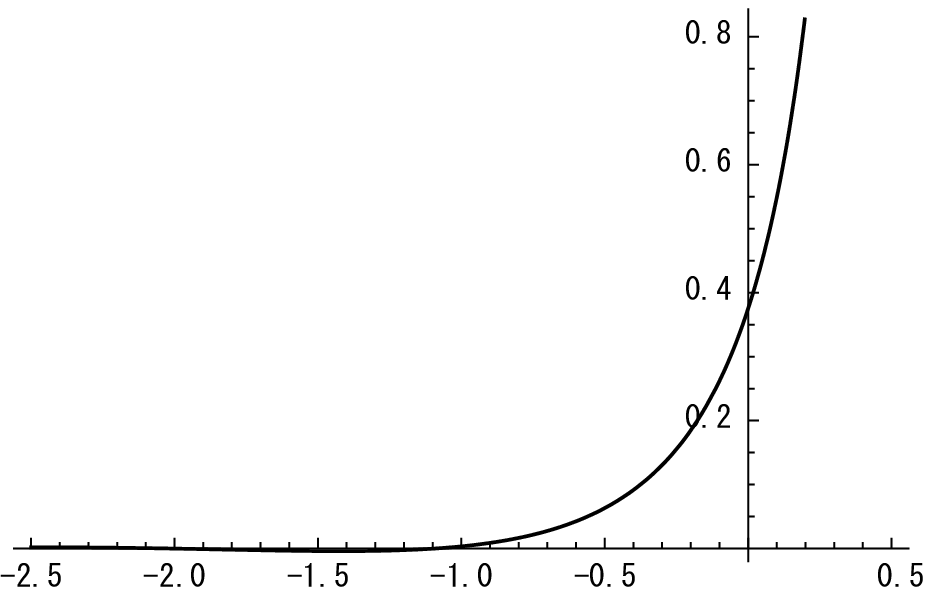} \hskip 1cm
\includegraphics[width=0.45\textwidth]{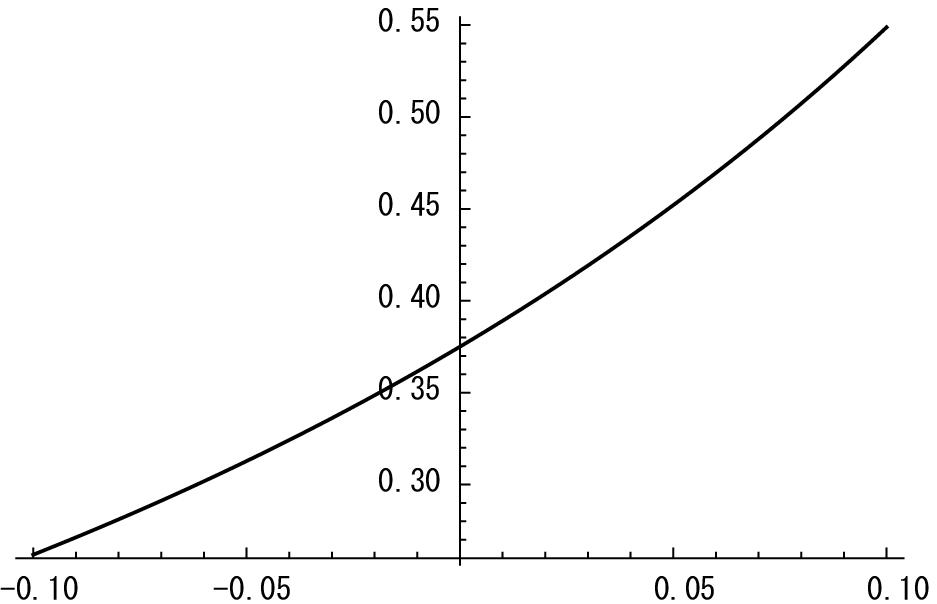}
\end{center}
\caption{The graph of $\zeta_2(\sigma, \sigma)$ for $-2.5 \le \sigma < 0.5$ (left) and a close-up in 
the vicinity of $\zeta_2^C(0,0)$ (right). The vertical axis represents 
$\zeta_2$ and the horizontal axis does $\sigma$.}
\label{X3-1}
\end{figure}

\begin{figure}
\begin{center} \leavevmode
\includegraphics[width=0.6\textwidth]{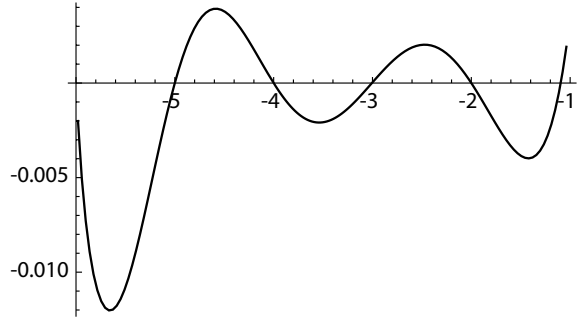}
\end{center}
\caption{The graph of $\zeta_2(\sigma, \sigma)$ for $\sigma < 0$. The vertical axis represents 
$\zeta_2$ and the horizontal axis does $\sigma$.
Amplitude of the vibration becomes intense when $\sigma < -6$. 
Here zeros are $-1.095527\cdots, \; -2, \; -3.005839\cdots, \; -4$ and $-5.000415\cdots$.}
\label{X3-2}
\end{figure}

When $k\in\mathbb{N}$, from \eqref{3-3} we have
\begin{align}\label{3-4}
\zeta_2^C(-k,-k)=\frac{1}{2}\zeta(-k)^2=\frac{(B_{1+k})^2}{2(1+k)^2}.
\end{align}
For example
\begin{align}\label{3-5}
\zeta_2^C(-1,-1)=\frac{1}{288},\;\zeta_2^C(-3,-3)=\frac{1}{28800},
\;\zeta_2^C(-5,-5)=\frac{1}{127008}.
\end{align}
These values are rather small because the corresponding values of Bernoulli
numbers are small ($B_2=1/6$, $B_4=-1/30$, $B_6=1/42$).
This is the reason why $\zeta_2(\sigma,\sigma)$ has zeros near 
$\sigma=-1,-3,-5$ in Figure \ref{X3-2}.   
However, since  $|B_{2k}|\sim 2(2k)!/(2\pi)^{2k}$ as
$k\to\infty$ (\cite[Theorem 12.18]{Apos}), we see that
$\zeta_2^C(-2k+1,-2k+1)\to\infty$ as $k\to\infty$. 

Comparing the above situation on the real axis with Observation \ref{obs1} (v),
we may guess that there would exist some $A>0$, such that $\zeta_2(s,s)$ would have 
no zero in the region $\sigma<-A$ except for the real axis.   This is again
similar to the case of Hurwitz zeta-functions (Theorem 2.7 of
\cite[Chapter 8]{LauGar02}).

\section{The method of computations}\label{sec-4}

In this section we explain the details of our method to compute the zeros of 
$\zeta_2(s,s)$ in $\mathbb{C}$.  As we already mentioned in Section \ref{sec-2},
we apply two methods.   The first method is based on the harmonic product
formula \eqref{2-2}, which gives high-precision zeros of
$\zeta_2(s,s)$ in any digits by virtue of the function {\it Zeta} of {\it Mathematica}.
But we also calculate the zeros by using the Euler-Maclaurin formula.
For this second method, 
instead of the formula \eqref{3-1}, we use the 
following truncated form: 
\begin{align}\label{4-1}
\zeta_2^N(s,s)&:=\frac{\zeta(2s-1)}{s-1}-\frac{\zeta(2s)}{2}\\            
&+\sum_{q=1}^l (s)_q\frac{B_{q+1}}{(q+1)!}\zeta(2s+q)-                           
\sum_{n_1=1}^{N}\frac{\phi_l(n_1,s)}{n_1^{s}},\notag
\end{align}
where the infinite summation of \eqref{3-1} is truncated by $N$ terms.

\subsection{Calculation accuracy}\label{subsec-0}

Obviously, calculation accuracy becomes better as the value of $N$
increases in \eqref{4-1}.  
It is necessary to ascertain the appropriate value of $N$ for our aim.
Fortunately we can get the data of high-precision zeros by \eqref{2-2}.
Therefore, comparing the zeros obtained from \eqref{4-1} with the 
high-precision zeros obtained by \eqref{2-2}, we can estimate the accuracy of zeros 
of \eqref{4-1}.
Appropriate value of $N$ depends on $l$, the number of terms of the first summation.  
As $l$ is smaller, it is necessary to take $N$ larger.
On the other hand, the number $l$ should be taken rather small since the value of 
the first 
summation increases rapidly with $l$ and $s$.  Under the machine precision, $l \leq 10$ is appropriate.

Some specific examples are shown below.
Let $s^*$ be the high-precision zero of \eqref{3-1} calculated by \eqref{2-2}, and let $s^{l,N}$ be the zero of \eqref{4-1} with 
$l$ and $N$. The first two examples are about zeros which will be shown in Figure \ref{X5-2}. 
Concerning $s^* = (0.719846\cdots) + i \; (42.458519\cdots)$, we obtain $s^{l,N}$ 
such as\footnote{Here and in what follows, the symbol $O(10^{-n})$ implies
$\leq C10^{-n}$ with a constant $C>0$.   What we actually want to claim is that
the error is as small as $10^{-n}$, so the numerical value of $C$ is $\lesssim 10$.}
$$
\begin{array}{lll}
|s^*-s^{10,100}| &= |7.8\times10^{-17} + i \; 4.3\times10^{-17}|  &= O\left(10^{-17}\right), \\
|s^*-s^{10,200}| &= |-6.2\times10^{-22} + i \; 3.2\times10^{-22}| &=O\left(10^{-22}\right),
\end{array}
$$
where $s^{10,100}$ is a zero of \eqref{4-1} with $(l,N)=(10,100)$ and so on.
It shows that larger $N$ gives higher accuracy, namely $|s^*-s^{l,N}|$ of $N=200$ is smaller
than that of $N=100$. Similarly, concerning $s^* = (1.043571\cdots) + i \; (98.989673\cdots)$, we have 
$$
\begin{array}{ll}
|s^*-s^{10,100}|  &= O\left(10^{-14}\right), \;\; |s^*-s^{10,200}|  = O\left(10^{-18}\right), \\
|s^*-s^{10,1000}| &= O\left(10^{-26}\right).
\end{array}
$$
Note that the accuracy of the second example is lower than that of the first one with the same $(l, N)$. 
The next example is as to $s^* = (0.778519\cdots) + i \; (799.497864\cdots)$, the imaginary part of which is larger 
than the above two examples.  In this case, we have
$$
\begin{array}{ll}
|s^*-s^{8,200}|  &= O\left(10^{-6}\right), \;\; |s^*-s^{6,300}|  = O\left(10^{-7}\right), \\
|s^*-s^{4,1000}| &= O\left(10^{-12}\right).
\end{array}
$$
For the third example, $l$ should be taken smaller than 10. The reason is that its absolute value is larger than the first two examples.
From these examples, we see that the calculation accuracy gets better by taking $N$ 
large. 
In our actual computations we choose $(l, N)=(10,100)$ for $t < 400$, $(l, N)=(8,200)$ for $400 \le t < 600$ and $(l, N)=(8,300)$ for $600 \le t < 800$, respectively, to realize higher accuracy than $O(10^{-6})$.

Hereafter, we will omit the superscript of $\zeta_2^N$ and $s^{l,N}$ for simplicity.

\subsection{The way to find zeros}\label{subsec-1}

We first search for candidates of zeros by drawing a plot of $|\zeta_2(s,s)|$.   
See Figure \ref{X4}, which shows the absolute value of 
$|\zeta_2(\sigma+it,\sigma+it)|$ for $\sigma=0.56$ and $0 \le t \le 80$.   
In this figure, there is a point which seems to be in contact with the horizontal axis.
Then we use it as a starting value for the root finder {\it FindRoot}, which is a built-in 
function of {\it Mathematica}.  As an option of {\it FindRoot}, we specify WorkingPrecision to 100 digits.

In this way, we draw this type of graphs for various values of $\sigma$ to search for 
candidates of zeros.  Figure \ref{X1-1} is the consequence of such search in the
region $-2 \le \sigma \le 4$ and $0 \le t \le 800$.
In Figure \ref{X1-1}, 
the leftmost zero is $$s = (-0.830372\cdots) + i \; (35.603804\cdots)$$ 
and the rightmost zero is $$s = (1.605277\cdots) + i \; (333.223539\cdots).$$

\begin{figure}
\begin{center} \leavevmode
\includegraphics[width=0.8\textwidth]{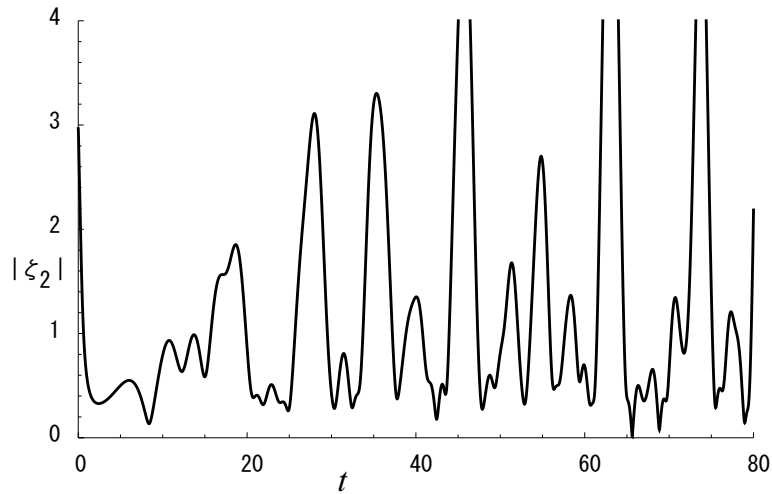}
\end{center}
\caption{A plot of $|\zeta_2|$ when $\sigma=0.56$ to search for a starting value for the root finder.}
\label{X4}
\end{figure}

Some numerical examples of zeros with small imaginary parts are listed below. 
\par
\begin{tabbing}
\hspace{3cm}\= \kill
\> $ ( 0.27672860\cdots) + i ( 8.39755368\cdots) $ \\
\> $ (-0.18995147\cdots) + i (12.30422130\cdots) $ \\
\> $ ( 0.06443907\cdots) + i (15.02312694\cdots) $ \\
\> $ (-0.53767831\cdots) + i (17.58063303\cdots) $ \\
\> $ ( 0.12844956\cdots) + i (20.59707674\cdots) $ \\
\> $ ( 0.08804454\cdots) + i (21.93232180\cdots) $ \\
\> $ ( 1.10778631\cdots) + i (23.79708697\cdots) $ \\
\> $ ( 0.27268471\cdots) + i (24.93425087\cdots) $ \\
\> $ (-0.67413685\cdots) + i (26.88584448\cdots) $ \\
\> $ (-0.15708737\cdots) + i (30.02450294\cdots) $ \\
\> $ ( 0.24085861\cdots) + i (30.35443945\cdots) $ \\
\> $ ( 0.27943393\cdots) + i (32.43085844\cdots) $ \\
\> $ ( 0.19640810\cdots) + i (33.30504691\cdots) $ \\
\> $ (-0.83037218\cdots) + i (35.60380497\cdots) $ \\
\> $ ( 0.26817981\cdots) + i (37.74099414\cdots) $ \\
\> $ ( 1.48543370\cdots) + i (38.13262119\cdots) $ \\
\> $ (-0.45570264\cdots) + i (39.63195833\cdots) $ \\
\> $ ( 0.09633802\cdots) + i (41.36138867\cdots) $ \\
\> $ ( 0.71984635\cdots) + i (42.45851912\cdots) $ \\
\> $ ( 0.32260735\cdots) + i (43.57397755\cdots) $ \\
\> $ ( 0.30547044\cdots) + i (47.82257631\cdots) $ \\
\> $ (-0.07836730\cdots) + i (47.93661087\cdots) $ \\
\> $ ( 0.17623000\cdots) + i (49.35798458\cdots) $ \\
\> $ (-0.10065156\cdots) + i (50.42344359\cdots) $ \\
\> $ ( 1.20851184\cdots) + i (52.67628393\cdots) $ \\
\> $ ( 0.25607674\cdots) + i (52.90185286\cdots) $ \\
\> $ ( 0.26312128\cdots) + i (56.23680524\cdots) $ \\
\> $ ( 0.99787597\cdots) + i (57.00712796\cdots) $ \\
\> $ (-0.54056567\cdots) + i (57.89377726\cdots) $ \\
\> $ ( 0.25852514\cdots) + i (59.37031354\cdots) $ \\
\end{tabbing}

The imaginary parts of the zeros in this list are less than 60, so the calculation
accuracy is much better than $O(10^{-6})$; it is around $O(10^{-17})$.


\subsection{The ``throwing a net and catching fish'' method}
\label{subsec-2}

By the method in the preceding subsection, we can find candidates of zeros,
but we canot determine by that method whether they are really zeros, or 
they are just very small absolute values but not 0.

Therefore, to make sure that they are really zeros, we have to develop another
method.   Let $s^*=\sigma^*+it^*$ be a candidate of zero which we found by the
method in Subsection \ref{subsec-1}.   Consider a small rectangle
$$
R=\{s=\sigma+it\;|\;a\leq\sigma\leq b,\;c\leq t\leq d\}
$$
which includes $s^*$ as an interior point.   Divide the interval $[a,b]$ as
$a=\sigma_0<\sigma_1<\cdots<\sigma_n=b$, and draw the figure of the curves
$$
\mathcal{K}_j=\{\zeta_2(\sigma_j+it,\sigma_j+it)\;|\;c\leq t\leq d\}
\qquad{\rm (}0\leq j\leq n{\rm )}
$$
on the complex plane.   If $R$ is sufficiently small, then the points inside $R$ 
are very close to $s^*$, and hence the curves $\mathcal{K}_j$ locate near the 
origin.   When $j$ moves from $0$ to $n$, the curves $\mathcal{K}_j$ also
move little by little.   If in the course of this moving process $\mathcal{K}_j$
crosses the origin, we should conclude that $\zeta_2(s,s)$ has a zero 
here, because $\zeta_2(s,s)$ is a continuous function (Figure \ref{X5-1} and Figure \ref{X5-2}).   
Therefore $s^*$ should be a zero of $\zeta_2(s,s)$.

\begin{figure}
\begin{center} \leavevmode
\includegraphics[width=0.45\textwidth]{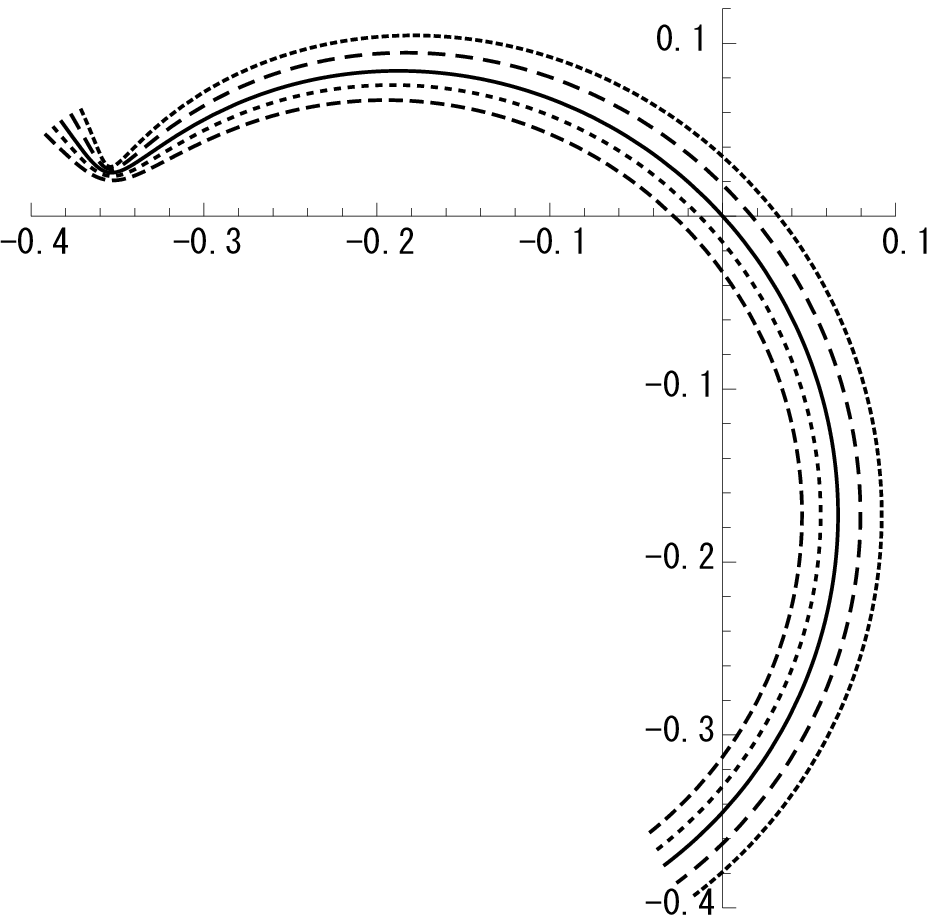} \hskip 1.5cm
\includegraphics[width=0.2\textwidth]{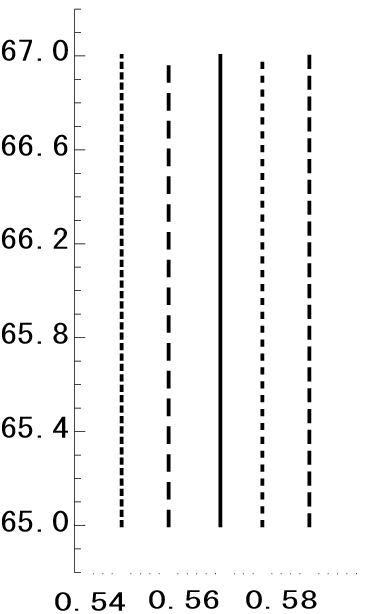} \\ \vskip 0.5cm
$s^* = (0.561016\cdots) + i\; (65.626461\cdots)$
\end{center}
\caption{The left figure is a plot of the curves $\zeta_2(s, s)$ with $65 \leq t \leq 67$ drawn 
in the complex plane.  The point $\zeta_2(s^*,s^*)$ is on the solid line.
The right figure is a plot of $s=\sigma+i t$ in $\sigma$-$t$ plane, 
whose each segment corresponds to each curve on 
the left figure.   The point $s^*$ is on the solid line.
The curves represent the behaviour of $\zeta_2(s, s)$ when 
$\sigma = 0.54, 0.55, 0.561016, 0.57$ and $0.58$, respectively.}
\label{X5-1}
\end{figure}

\begin{figure}
\begin{center} \leavevmode
\includegraphics[width=0.45\textwidth]{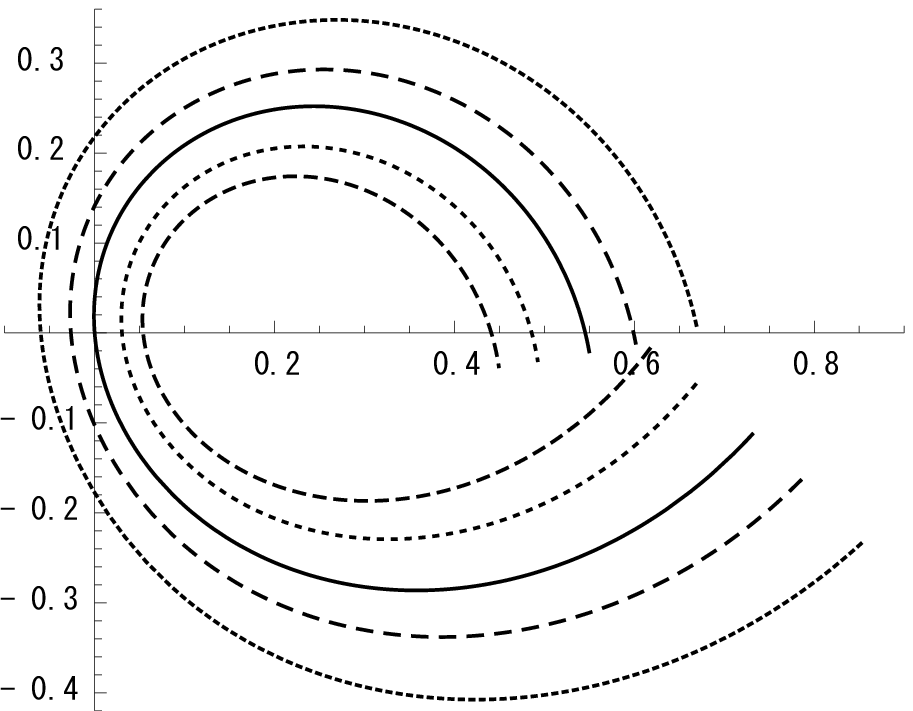} \hskip 1cm
\includegraphics[width=0.45\textwidth]{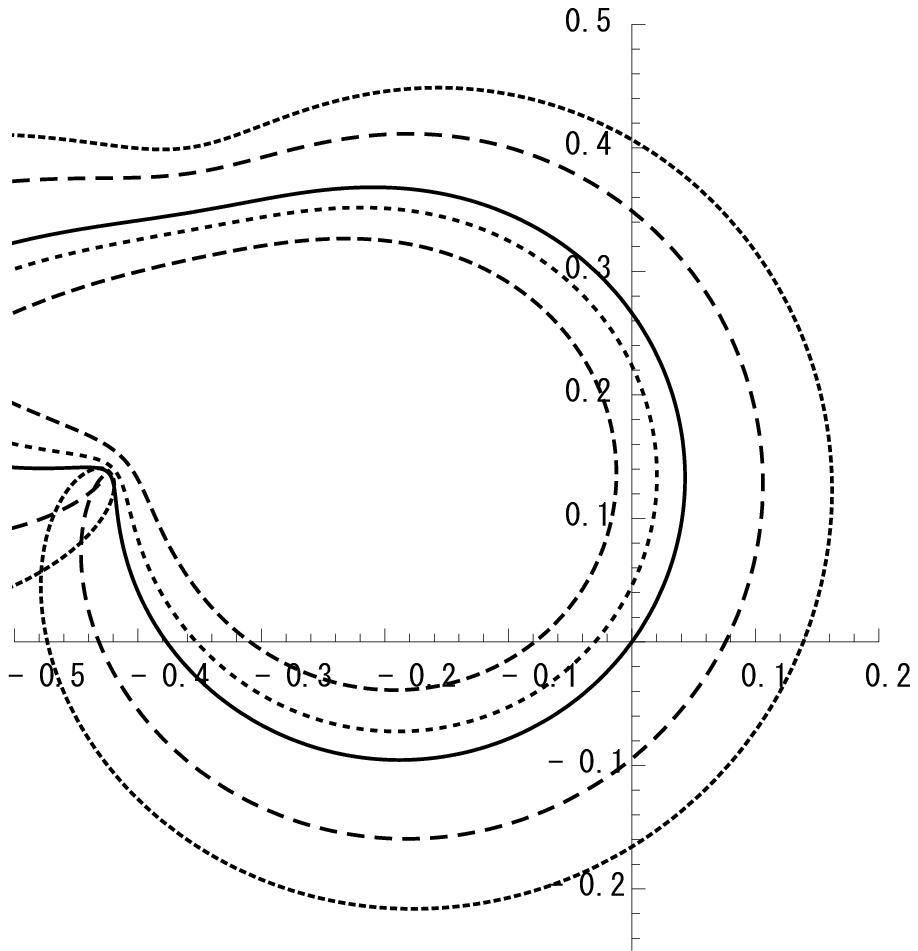} \\ \vskip 0.5cm
(i) $s^* = (1.043571\cdots) + i \; (98.989673\cdots)$ 
\par (ii) $s^* = (0.719846\cdots) + i \; (42.458519\cdots)$
\end{center}
\caption{Plots of the curve $\zeta_2(\sigma+it,\sigma+it)$ drawn in the complex plane. 
The left figure shows the curves with $98 \leq t \leq 100$ for $\sigma = 0.95,\; 1,\; 1.043572,\;
 1.1$ and $1.15$. 
The right figure shows the curves with $41 \leq t \leq 44$ for $\sigma = 0.6, 0.65,\; 0.719846,\;
 0.75$ and $0.8$.}  
\label{X5-2}
\end{figure}

It is to be noted that we have not yet checked all candidates of zeros in Figure
\ref{X1-1} by this  ``throwing a net and catching fish'' method, but we believe that
all of those candidates are indeed zeros.

\subsection{The order of the zeros}

Some zeros in Figure \ref{X1-1} seem to overlap or be very close to each other,
but this is because the vertical scale of Figure \ref{X1-1} is heavily reduced.
All of them are actually isolated from other zeros.
In fact, the closest zeros in the figure are \par
$$(-0.024589\cdots) + i\; (575.888143\cdots)$$
and
$$(0.176317\cdots) + i\; (575.841132\cdots),$$
the distance of which is about $0.206$.

As a precaution, for all zeros here, we checked that the values of the derivative 
$\zeta_2^{\prime}(s,s)$ at those points are non-zero.
The values of the derivative of $\zeta_2(s,s)$ can be calculated, by using
\eqref{2-2}, by a built-in function of {\it Mathematica}.
From those facts, we are sure they are not zero of order two (or more)
\footnote{One might be worried about the possibility that a zero point on
Figure \ref{X1-1} actually represents two zeros which are so close to each other
that cannot be distinguished by our present computations.   However such a possibility
can also be removed by checking the values of derivatives.}. 

\bigskip

\textbf{Acknowledgements.}$\;$   
The authors express their thanks to Professor Aleksandar Ivi{\'c}, 
Professor Ken Kamano and Professor Gediminas Stepanauskas for valuable comments.

\

\end{document}